\documentclass[11pt,reqno]{amsart}
\usepackage{amscd,amsmath,amsopn,amssymb,amsthm,multicol}
\usepackage{tikz,subdepth,anysize,verbatim,ifthen,xargs,colortbl,float}
\usepackage{longtable,mathtools,hyperref}
\usepackage[english]{babel}
\everymath=\expandafter{\the\everymath\displaystyle}
\theoremstyle{plain}
\newtheorem{theorem}{Theorem}

\newtheorem{lemma}{Lemma}
\newtheorem{cor}{Corollary}
\newtheorem{rk}{Remark}
\theoremstyle{definition}
\newtheorem{definition}{Definition}
\newtheorem{example}{Example}

\newcommand\com[1]{}
\newcommand\C{{\mathbb C}}
\newcommand\f{{\mathfrak f}}
\newcommand\g{{\mathfrak g}}
\newcommand\gl{{\mathfrak{gl}}}
\newcommand\h{\mathfrak{h}}
\newcommand\p{{\mathfrak p}}
\newcommand\m{{\frak m}}

\newcommand\op[1]{\mathop{\rm #1}\nolimits}
\newcommand\R{{\mathbb R}}
\renewcommand\sl{{\mathfrak{sl}}}
\newcommand\Z{{\mathbb Z}}

\usetikzlibrary{arrows,decorations.markings,positioning}
\tikzset{inner sep=0pt, node distance=5mm,
  root/.style={circle,draw,minimum size=5pt,thick},
  broot/.style={circle,draw,minimum size=5pt,thick,fill},
  xroot/.style={circle,draw,minimum size=5pt,thick,fill=gray!70!white},
  crossroot/.style={circle,draw,minimum size=5pt,thick,label=below:$\times$},
  doublearrow/.style={postaction={decorate},   decoration={markings,mark=at position .6 with {\arrow[line width=1.2pt]{>}}},double distance=1.6pt,thick},
  doublenoarrow/.style={double distance=1.6pt,thick},
  rdoublearrow/.style={postaction={decorate},   decoration={markings,mark=at position .4 with {\arrowreversed[line width=1.2pt]{>}}},double distance=1.6pt,thick},
  rtriplearrow/.style={postaction={decorate},   decoration={markings,mark=at position .4 with {\arrowreversed[line width=1.2pt]{>}}},double distance=2.5pt,thick},
  curvedline/.style={bend=right}
}

\begin{document}

\title{Prolongation rigidity of sub-free Lie algebras}\label{S1}

\author{Boris Kruglikov}
\address{Department of Mathematics and Statistics, UiT the Arctic University of Norway, Troms\o\ 9037, Norway.
\ E-mail: {\tt boris.kruglikov@uit.no}. }

 \begin{abstract}
We prove that if the 0-th Tanaka prolongation $\g_0=\mathfrak{der}_0(\m)$ of a fundamental graded nilpotent Lie algebra 
$\m=\g_{-s}\oplus\dots\oplus\g_{-1}$ is irreducible on $\g_{-1}$, then $\m$ is prolongation rigid: $\op{pr}_+(\m)=0$. 
The only exceptions are given by negative gradations of maximal parabolic subalgebras of a simple Lie algebra.
 \end{abstract}

\maketitle

%%%%%
\section{Formulation of the Problem and the Results}\label{S1}

A finite-dimensional graded nilpotent Lie algebra (GNLA) $\m=\g_{-s}\oplus\dots\oplus\g_{-1}$ is called fundamental, 
if $\g_{-1}$ bracket-generates $\m$ and contains no central elements $\mathfrak{z}(\m)\cap\g_{-1}=0$.
The depth of $\m$ is $s$. If $\m$ is fundamental then $s>1$ and $\g_{-1}$ is proper (otherwise $\mathfrak{z}(\m)=\g_{-1}=\m$); 
the subspace $\g_{-1}$ corresponds to a nonholonomic distribution $\Delta$ on $M_o=\exp\m$.

The Tanaka prolongation \cite{T} of $\m$ is the maximal graded Lie algebra $\g=\op{pr}(\m)=\oplus\g_k$ such that $\m=\g_-$
and $[\g_{-1},x]=0$ for $x\in\g_{\ge0}$ implies $x=0$. This is constructed successively in grading $k$.
In particular, $\g_0=\mathfrak{der}_0(\m)$ consists of grading preserving derivations (it always contains the
grading element $Z$ given by $[Z,x]=kx$ for $x\in\g_k$) and $\g_1$ is identified with the space of elements in 
$\op{Hom}(\g_{-1},\g_0)$ that (uniquely) extend to a homomorphism $\m\to\m\oplus\g_0$ of degree $1$ 
satisfying the Jacobi identity, whenever defined. 

A free graded nilpotent Lie algebra $\f(n)$ with $n$ generators is a maximal Lie algebra generated by 
$\g_{-1}$, $\dim\g_{-1}=n$, modulo the constraints from the Jacobi identity.
Any GNLA is a quotient of the free GNLA by a graded ideal $\h\subset\g$, $\h_{-1}=0$.
A truncated free GNLA of depth $s$ is a free GNLA $\f_s(n)$ that is obtained from $\f(n)$ as the quotient by the ideal
of gradation less than $-s$: $\f_s(n)=\oplus_{i=1}^s\g_{-i}$. We will also call 
$\f_s(n)$ a free graded nilpotent Lie algebra of depth $s$.

It is easy to see that $\g_0=\op{pr}_0(\f_s(n))=\gl(n)$ (our results apply both over $\R$ and $\C$, so we will not specify 
the ground field).
In this case it was proven in \cite{W} that $\op{pr}_+(\f_s(n))=0$, so that $\op{pr}(\m)=\f_s(n)\rtimes\gl(n)$, provided the depth
$s>2$ for $n>2$ and $s>3$ for $n=2$.
We will generalize this result.

 \begin{definition}
A fundamental GNLA $\m$ is called sub-free if $\g_0=\op{pr}_0(\m)=\gl(n)$, where $n=\dim\g_{-1}$.
(In particular, a free GNLA is also sub-free.)
 \end{definition}

As any GNLA with $n$ generators, a sub-free GNLA is a quotient algebra of a free GNLA of depth $s$, however we additionally
require that the 0-th prolongation is preserved in this reduction. 

We recall that for a simple Lie algebra $\g$ a parabolic subalgebra $\p$ is the nonnegagive part $\g_0\oplus\g_+$
of a $\Z$-grading of $\g$ (such grading is bijective with a choice of parabolic, up to conjugation). 
The parabolic is marked by crossed nodes of the Dynkin diagram, corresponding to simple root vectors not in $\g_0$, over $\C$.
Over $\R$ the same applies for the Satake diagram, with the constraints that black nodes cannot be crossed and
the arrow-related nodes must be crossed simultaneously. We denote by $\p_I$ the parabolic corresponding to crossed nodes
$I\subset\{1,\dots,r\}$ for $r=\op{rank}(\g)$. We use Bourbaki's numeration of simple roots \cite{B}.

The negative part $\m_I$ is isomorphic to the nilradical of $\p_I$ (equivalently: $\m_I$ is the nilradical of the opposite parabolic), 
so $\g=\m\oplus\p$. The Yamaguchi theorem \cite{Ym} ensures when $\op{pr}(\m)=\g$.
One of our main results is:

 \begin{theorem}\label{Th1}
Let $\m$ be a fundamental GNLA different from $\m_I$ in the case of parabolic $\p_1$ for $G_2$ or 
$\p_r$ for $B_r$. If $\m$ is sub-free, then it is prolongation rigid: $\op{pr}_+(\m)=0$.
 \end{theorem}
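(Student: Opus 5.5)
The key observation is that $\g_1=\op{pr}_1(\m)$ is a $\g_0=\gl(n)$-submodule of $\op{Hom}(\g_{-1},\g_0)=V^*\otimes\gl(V)=V^*\otimes V\otimes V^*$, where $V=\g_{-1}$. Moreover, $\op{pr}_+(\m)=0$ as soon as $\g_1=0$, because $\g_{k+1}$ embeds into $\op{Hom}(\g_{-1},\g_k)$. So the plan is to rule out every possible nonzero $\gl(n)$-submodule of this space. As a $\gl(n)$-module,
$$V^*\otimes V\otimes V^*=V^*\oplus V^*\oplus\bigl(S^2V^*\otimes V\bigr)_0\oplus\bigl(\Lambda^2V^*\otimes V\bigr)_0,$$
where the two copies of $V^*$ are traces and the subscript $0$ denotes trace-free parts. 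For $n\ge3$ this is the full decomposition. For $n=2$ the last summand vanishes and things degenerate, but then $\m$ is a quotient of $\f_s(2)$, which can be handled by a direct finite check. We also use the grading element $Z$ and the central $\gl(1)\subset\gl(n)$: every $\varphi\in\g_1$ satisfies $[\varphi(x),y]-[\varphi(y),x]=\varphi([x,y])$ for $x,y\in\g_{-1}$, with $\varphi$ extended to $\g_{-2}$ by this formula, and similarly on deeper levels.

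We treat the isotypic components one at a time.

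\textbf{Trace-free components.} The $(S^2V^*\otimes V)_0$ part, together with the rank-one-trace $V^*$ parts, is exactly $\op{pr}_1$ of the projective/affine type: it is $\op{pr}_1$ of the contact-free structure $\sl(n+1)$ with grading $\p_1$. We show that if such $\varphi\ne0$ lies in $\g_1$, then $\g_{-2}$ is forced to vanish. The reason is that the compatibility condition on $\g_{-2}=\Lambda^2V/\h_{-2}$ means $\varphi$ maps $\g_{-2}$ into $\g_{-1}$ equivariantly, and for the projective-type $\varphi$ the induced map would have to annihilate all brackets. This contradicts fundamentality, since $\g_{-2}\neq0$ when $s>1$.

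\textbf{The $(\Lambda^2V^*\otimes V)_0$ part.} This component corresponds to maps $\g_{-1}\wedge\g_{-1}\to\g_{-1}$. A nonzero $\g_1$ of this type forces $\g_{-2}$ to be identified with a quotient of $\Lambda^2V$ on which $\gl(n)$ acts compatibly with an extra grading-$+1$ operator. Here $\g_{-2}$ is a $\gl(n)$-quotient of $\Lambda^2V$, which is irreducible, so either $\g_{-2}=\Lambda^2V$ or $\g_{-2}=0$. The second option is excluded. In the first case we continue to $\g_{-3}$, a $\gl(n)$-quotient of $(V\otimes\Lambda^2V)/\Lambda^3V=S_{(2,1)}V$, which is again irreducible, so $\g_{-3}$ is either $0$ or $S_{(2,1)}V$. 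Thus $\m$ is either $\f_2(n)$ or has $\g_{\le-3}$ at least as large as $\f_3(n)$ in degree $-3$.

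For $\f_2(n)$ one has $\op{pr}=\mathfrak{so}(n,n+1)$ (Yamaguchi, parabolic $\p_r$ of $B_r$ with $r=n$), which is the excluded case. When $n=2$ the analogue is $\f_3(2)$ with prolongation $G_2$ (parabolic $\p_1$), also excluded. Otherwise the depth exceeds these thresholds, and the argument of \cite{W} applies. That argument computes the Spencer-type condition only in degrees $-1,-2,-3$ (and $-4$ for $n=2$), and these coincide with the free ones as soon as the relevant graded pieces are full. Hence it gives $\g_1=0$.

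\textbf{Main obstacle.} The hardest step is the intermediate situation: $\g_{-3}$ (or $\g_{-4}$ for $n=2$) is full, but higher pieces are proper quotients. We must then show that Warhurst's computation truly uses only low degrees. Equivalently, the obstruction to $\varphi\in\g_1$ must already appear in $\op{Hom}$ from degrees $\ge-3$. If that fails, we would instead argue by irreducibility of each $\g_{-k}$ piece together with Schur's lemma, checking the equivariant maps $\g_{-k}\to\g_{-k+1}$ induced by $\varphi$ inductively.
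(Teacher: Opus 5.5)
Your proposal has a genuine gap at the step you yourself call the ``main obstacle'', and the reduction to \cite{W} there is not merely unproven but false. Write $\m=\f(n)/\h$ with $\h$ a graded $\gl(n)$-invariant ideal, and $V=\g_{-1}$. By the universal property of free Lie algebras, every $\varphi\in\op{Hom}(V,\gl(V))$ extends uniquely to a degree-one derivation $D_\varphi$ of $\f(n)$ with values in $\f(n)\rtimes\gl(V)$. Hence $\g_1(\m)=\{\varphi: D_\varphi(\h_{-k})\subset\h_{-k+1}\ \forall k\}$, and any level at which $\m$ is free imposes no condition at all.

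For the truncation $\f_s(n)$ the only condition is $D_\varphi(\f_{-s-1})=0$. For $s=2$, $n\ge3$, this leaves $\g_1\cong V^*$, which is the $B_n$ case. For $s=3$, Warhurst shows it leaves nothing. So the obstruction in \cite{W} lives in degree $-(s+1)$ and is produced by the truncation itself. A sub-free $\m$ whose $\g_{-4}$ is full never sees it, and your argument has no replacement for it.

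To finish along your route you would have to show the following. For every $k\ge4$ ($k\ge5$ if $n=2$), and every nonzero proper $\gl(n)$-submodule $\h_{-k}\subset\f_{-k}$ occurring as the first relation module, together with the subsequent conditions, no $\varphi\neq0$ survives. That is an infinite family of cases, and you offer no uniform mechanism for it. The Schur-lemma fallback does not provide one, because $\f_{-k}$ is already reducible at $k=4$ ($k=5$ for $n=2$) and later acquires multiplicities, e.g.\ $\f_{-7}=2\Gamma_1+2\Gamma_3+\Gamma_5$ for $n=2$. For the same reason the ``direct finite check'' for $n=2$ is not finite: the tree of sub-free quotients in Section 2 is infinite.

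Your treatment of the $V^*$-isotypic part is also wrong. For $n\ge3$, $\g_1(\f_2(n))$ is the copy $\{\varphi_\alpha:\ \varphi_\alpha(x)y=\alpha(y)x\}$ of $V^*$. This copy sits diagonally in the multiplicity-two component $V^*\oplus V^*$, since both its symmetric and its skew parts are nonzero, and yet $\g_{-2}=\Lambda^2V\neq0$. So a $V^*$-type element does not force $\g_{-2}=0$, and the $B_n$ exception does not come from $(\Lambda^2V^*\otimes V)_0$ as your bookkeeping suggests.

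The missing idea is the one the paper uses to avoid any degree-by-degree analysis:
\begin{enumerate}
\item Establish finite type first. The corank-one criterion of \cite{DR} is immediate from $\g_{-2}=\Lambda^2\g_{-1}$.
\item Pass to the Levi factor $\g^{ss}$ of the finite-dimensional $\g=\op{pr}(\m)$. By \cite{Yt} it contains $\g_+$.
\item If $\g_1\neq0$, then $\g^{ss}$ also contains the $\g_0$-modules in $\g_{-1}$ paired with $\g_1$. By irreducibility it contains all of $\g_{-1}$, and by bracket generation all of $\m$. So $\g$ is semisimple, in fact simple.
\item The classification of gradings of simple Lie algebras with $\g_0\cong\gl(n)$ and $\g_{-1}$ the standard representation then leaves only $(B_n,\p_n)$ and $(G_2,\p_1)$.
\end{enumerate}
This handles all relation ideals $\h$ at once, which is exactly what your approach lacks.
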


The proof is based on structure theory of Lie algebras; it is independent of and is shorter than 
the prolongation rigidity proof for free Lie algebras in \cite{W}. We will generalize this further to 
the notion of $\g_0$ sub-free Lie algebras and the corresponding rigidity result in Section \ref{S3}.
 
To discuss geometric applications, let us recall basics of vector distributions.
For a nonholonomic distribution $\Delta\subset M$ its symbol at a point $x\in M$ is the GNLA associated to the 
filtered weak derived flag 
 $$
\Delta=\Delta^{1}\subset\Delta^{2}=[\Delta,\Delta]\subset\Delta^{3}=[\Delta,\Delta^{2}]\subset\Delta^{4}=[\Delta,\Delta^{3}]
\subset\dots
 $$
where the brackets of distributions stand for the brackets of the corresponding modules of sections; if the resulting 
$C^\infty(M)$-module is projective (which we adapt) it is the module of sections of resulting distribution.
Thus $\Gamma([D,D'])=[\Gamma(D),\Gamma(D')]$, etc. Then $\g_{-1}(x)=\Delta_x$ and
$\g_{-i}(x)=\Delta^i_x/\Delta^{i-1}_x$ for $i>1$. So we get a GNLA $\m(x)=\oplus_{k<0}\g_k(x)$ for every $x\in M$,
which is fundamental provided $\Delta$ is nonholonomic (bracket generating $TM$) and free of Cauchy characteristics 
(symmetries from $\Gamma(\Delta)$ corresponding to $\mathfrak{z}(\m)\cap\g_{-1}$); 
then we compute its Tanaka prolongation $\g(x)=\op{pr}(\m(x))$. 

According to \cite{K} the symmetry algebra $\op{Sym}(\Delta)$ of $(M,\Delta)$ is bounded by $\sup_{x\in M}\dim\g(x)$.

 \begin{cor}\label{Cor1}
Let $\Delta$ be a nonholonomic distribution of rank $n$ on a manifold $M$ of dimension $m$. 
If its symbol $\m(x)$ is sub-free at every point $x\in M$ and differs from exceptions of Theorem \ref{Th1}, 
then $\dim\op{Sym}(\Delta)\leq n^2+m$.
 \end{cor}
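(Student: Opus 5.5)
The plan is to combine Theorem \ref{Th1} with the symmetry bound of \cite{K} pointwise, and then take the supremum over $x\in M$.

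First, fix $x\in M$. By hypothesis $\m(x)$ is sub-free, so by definition it is a fundamental GNLA and $\g_0(x)=\op{pr}_0(\m(x))=\gl(n)$, where $n=\dim\g_{-1}(x)=\op{rank}\Delta$. Since $\m(x)$ is not one of the two exceptional algebras $\m_I$ (the $\p_1$ case for $G_2$ and the $\p_r$ case for $B_r$), Theorem \ref{Th1} applies and gives $\op{pr}_+(\m(x))=0$. Hence
$$
\g(x)=\op{pr}(\m(x))=\m(x)\oplus\gl(n).
$$

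Second, I compute the dimension. Because $\Delta$ is nonholonomic, the weak derived flag $\Delta^i_x$ exhausts $T_xM$. The graded pieces $\g_{-i}(x)=\Delta^i_x/\Delta^{i-1}_x$ therefore satisfy $\dim\m(x)=\sum_i\dim\g_{-i}(x)=\dim T_xM=m$. Consequently $\dim\g(x)=m+n^2$ for every $x\in M$.

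Finally, I invoke the result of \cite{K}, which gives $\dim\op{Sym}(\Delta)\le\sup_{x\in M}\dim\g(x)$. Since this supremum equals $n^2+m$, the claimed bound follows.

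There is no substantial obstacle; the corollary is a direct consequence of Theorem \ref{Th1}. The only point to check is that the hypotheses of \cite{K} hold. These require that the symbol is fundamental at each point, meaning $\Delta$ is bracket generating and free of Cauchy characteristics, and this is already built into the definition of sub-free. They also require the projectivity/regularity conventions adopted above for the derived flag, which we have assumed.
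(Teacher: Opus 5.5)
Your proposal is correct and follows the same route as the paper: apply Theorem \ref{Th1} pointwise to get $\op{pr}(\m(x))=\m(x)\rtimes\gl(n)$ of dimension $m+n^2$, then invoke the symmetry bound of \cite{K}. The only difference is that the paper explicitly derives the absence of Cauchy characteristics from $\g_0=\gl(n)$ (the subspace $\mathfrak{z}(\m)\cap\g_{-1}$ is $\g_0$-invariant, hence $0$ or all of $\g_{-1}$, and the latter would make $\m$ abelian), whereas you take it from the definition of sub-free, which is legitimate since that definition already presupposes $\m$ fundamental.
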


A Lie group $G_x=\exp\m(x)$ is called the Carnot group or nilpotent approximation of $(M,\Delta)$ at $x$.
Its rigidity, i.e.\ finite-dimensionality of the symmetry algebra (or Tanaka prolongation), is important in control theory
\cite{R,OW,GKMV}. The prolongation rigidity of Theorem \ref{Th1} is an even stronger property, sufficient for rigidity.

%%%%%
\section{Examples of sub-free Lie algebras}\label{S2}

Free Lie algebras are given by $\f(n)=\dots\oplus\g_{-2}\oplus\g_{-1}$. The string $(\dim\g_{-1},\dim\g_{-2},\dots)$
is called the growth vector of $\m$ (in geometric terms, the reduced growth of distribution $\Delta$). 
The formula for $d_k=\dim\g_{-k}$, with $d_1=n$ is well-known \cite{Re}:
 $$
d_k=\frac1k\sum_{t|k}\mu(t)n^{k/t}, 
 $$
where $\mu(t)$ is the M\"obius function. Equivalently, by the M\"obius inversion, $d_k$ can be found from the recursive relaton 
$\sum_{t|k}td_t=n^k$. 
For $n=2$ the reduced growth vector of $\f(n)$ is
 $$
2, 1, 2, 3, 6, 9, 18, 30, 56, 99, 186, 335, 630, 1161, 2182, 4080, 7710, 14532, 27594, 52377, \dots 
 $$
for $n=3$ it is 
 $$
3, 3, 8, 18, 48, 116, 312, 810, 2184, 5880, 16104, 44220, 122640, 341484, 956576, \dots 
 $$
for $n=4$ it is 
 $$
4, 6, 20, 60, 204, 670, 2340, 8160, 29120, 104754, 381300, 1397740, \dots 
 $$
etc. (For Lie algebras $\f_s(n)$ one has to truncate the string.)

Since $\g_0=\gl(n)$ the vector spaces $\g_{-k}$ are $\g_0$ modules. Both $\g_{-1}$ and $\g_{-2}$ are irreducible.
Using notations $\pi_i$ for fundamental weights of $A_{n-1}=\sl(n)$ and $\Gamma_\omega$ for representations
of weigth $\omega=\sum p_i\pi_i$, they are respectively the standard $\Gamma_{\pi_1}$ and the second 
fundamental $\Gamma_{\pi_2}=\Lambda^2\Gamma_{\pi_1}$ (except for $n=2$ when we get $\Gamma_0$). 

In order to describe components of free and sub-free algebras we need the following branching rule for arising $\g_0$ modules.
(The Jacobi identity corresponds to $\Lambda^3\g_{-1}$ and more complicated plethysms.)

 \begin{lemma}
For arbitrary GNLA we have $\g_{-2}\subset\Lambda^2\g_{-1}$ and $\g_{-k-1}\subset\g_{-1}\otimes\g_{-k}$ for $k>1$.
 \end{lemma}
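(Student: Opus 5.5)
The plan is to use that the bracket of $\m$ is surjective in each degree and is equivariant for $\g_0=\mathfrak{der}_0(\m)$. Because $\m$ is graded and $\g_{-1}$ bracket-generates it, every element of $\g_{-k-1}$ for $k\ge1$ is a sum of brackets $[x,y]$ with $x\in\g_{-1}$ and $y\in\g_{-k}$. Indeed, $\g_{-k-1}$ is spanned by iterated brackets of elements of $\g_{-1}$ of length $k+1$. Using the Jacobi identity, each such bracket can be rewritten as a right-normed commutator $[x_1,[x_2,\dots,[x_k,x_{k+1}]\dots]]$, whose inner part lies in $\g_{-k}$. Since the grading is a Lie algebra grading, these right-normed commutators also span.

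Consequently the bracket map $\beta_k:\g_{-1}\otimes\g_{-k}\to\g_{-k-1}$ is surjective. For $k=1$, skew-symmetry shows that $\beta_1$ factors through $\Lambda^2\g_{-1}$, giving a surjection $\Lambda^2\g_{-1}\to\g_{-2}$.

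Next I check that these maps are $\g_0$-equivariant. Elements $D\in\g_0$ are degree-zero derivations, so
$$D[x,y]=[Dx,y]+[x,Dy],$$
which says exactly that $\beta_k(D\cdot(x\otimes y))=D\cdot\beta_k(x\otimes y)$ for the tensor-product action. So $\g_{-2}$ is a quotient $\g_0$-module of $\Lambda^2\g_{-1}$, and $\g_{-k-1}$ is a quotient $\g_0$-module of $\g_{-1}\otimes\g_{-k}$.

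To turn quotients into submodules I need complete reducibility. In the sub-free situation of interest, $\g_0=\gl(n)$ acts reductively: the center acts by scalars on each tensor power, and $\sl(n)$ is semisimple. The grading element $Z$ also acts by a scalar on each $\g_{-k}$. Hence every finite-dimensional module built from $\g_{-1}$ by tensor operations is completely reducible, and the kernel of $\beta_k$ has a $\g_0$-invariant complement isomorphic to the image. This gives $\g_{-2}\subset\Lambda^2\g_{-1}$ and $\g_{-k-1}\subset\g_{-1}\otimes\g_{-k}$ as $\g_0$-modules.

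For a general GNLA, where $\g_0$ need not be reductive, the inclusion should be read in one of two ways. It can be read as the statement that the target is a quotient. Alternatively, one dualizes: the transpose $\beta_k^*:\g_{-k-1}^*\hookrightarrow\g_{-1}^*\otimes\g_{-k}^*$ is an injective equivariant map, and one identifies the modules up to this duality. I expect the main obstacle to be exactly this point, namely justifying the passage from quotient to submodule. I would handle it via the reductivity of $\g_0$ in the cases the paper uses, with the grading element acting diagonally, and note that otherwise the statement is meant at the level of the quotient or dual embedding. The surjectivity and equivariance steps are routine.
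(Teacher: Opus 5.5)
Your proposal is correct and is essentially the paper's argument: bracket generation gives an epimorphism $\g_{-1}\otimes\g_{-k}\to\g_{-k-1}$ (through $\Lambda^2\g_{-1}$ for $k=1$), and complete reducibility of the relevant $\gl(n)$-modules lets you split this map and invert the arrow. Your explicit equivariance check, and your remark that the center acts by scalars (needed because not every $\gl(n)$-module is completely reducible), make the paper's one-line appeal to complete reducibility more precise.
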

 
 \begin{proof}
The bracket generating property yields an epimorphism $\g_{-1}\otimes\g_{-k}\to\g_{-k-1}$ (for $k=1$ we may use the wedge product). 
Since all $\gl(n)$ modules are completely reducible, we can split and invert the arrow. 
 \end{proof} 

{\bf Case n\,$=$\,2.} In this case the irreps of $\g_0^{ss}=A_1$ are enumeratred by $\Gamma_t:=\Gamma_{t\pi_1}$ (of $\dim=t+1$) and the free GNLA has the following description. 
We have $\g_{-1}=\Gamma_1$, $\g_{-2}=\Gamma_0$, $\g_{-3}=\Gamma_1$. Then $\g_{-1}\otimes\g_{-3}=\Gamma_0+\Gamma_2$, 
and the first component is ruled out by the Jacobi identity, so that $\g_{-4}=\Gamma_2$. 
Next, $\g_{-1}\otimes\g_{-4}=\Gamma_1+\Gamma_3=\g_{-5}$.
Next, $\g_{-1}\otimes\g_{-5}=\Gamma_1\otimes\Gamma_1+\Gamma_1\otimes\Gamma_3=
\Gamma_0+2\Gamma_2+\Gamma_4$ and the Jacobi identity eliminates one copy of $\Gamma_2$, so that 
$\g_{-6}=\Gamma_0+\Gamma_2+\Gamma_4$. Then we have
$\g_{-1}\otimes\g_{-6}=2\Gamma_1+2\Gamma_3+\Gamma_5=\g_{-7}$, etc.

A reduction of one or more of those components, compatible with bracket generation, gives an example of a sub-free Lie algebra.
Consider, for instance, $\g_{-5}$ that contains two proper submodules $\Gamma_1$ and $\Gamma_3$ these can be taken as reductions
$\g_{-5}'$ and $\g_{-5}''$, respectively, and we obtain two sub-free GNLA of depth 5: 
 $$
\m'_5= \g_{-5}'\oplus\g_{-4}\oplus\g_{-3}\oplus\g_{-2}\oplus\g_{-1}\quad\text{and}\quad
\m''_5= \g_{-5}''\oplus\g_{-4}\oplus\g_{-3}\oplus\g_{-2}\oplus\g_{-1}.
 $$
These have growth $(2,1,2,3,2)$ and $(2,1,2,3,4)$, respectively, and have the following common relations
 $$
\m_4: \qquad  
[e^1_1,e^1_2]=e^2,\ [e^1_1,e^2]=e^3_1,\ [e^1_2,e^2]=e^3_2,\ [e^1_1,e^3_1]=e^4_{11},\ 
[e^1_1,e^3_2]=[e^1_2,e^3_1]=e^4_{12},\ [e^1_2,e^3_2]=e^4_{22}  
 $$
and then specific structure relations ($e^k_\sigma$ is a basis element of grading $-k$ and numeration $\sigma$):
 \begin{align*}
\m'_5: & \quad 
[e^1_1,e^4_{12}]=\tfrac23e^5_1,\ [e^1_1,e^4_{22}]=\tfrac43e^5_2,\ 
[e^1_2,e^4_{11}]=-\tfrac43e^5_1,\ [e^1_2,e^4_{12}]=-\tfrac23e^5_2,\
[e^2,e^3_1]=2e^5_1,\ [e^2,e^3_2]=2e^5_2; \\
\m''_5: & \quad
[e^1_1,e^4_{11}]=e^5_{111},\ [e^1_1,e^4_{12}]=[e^1_2,e^4_{11}]=e^5_{112},\ 
[e^1_1 e^4_{22}]=[e^1_2,e^4_{12}]=e^5_{122},\ [e^1_2,e^4_{22}]=e^5_{222}.
 \end{align*}
By Theorem \ref{Th1} their prolongations are $\op{pr}(\m'_5)=\m'_5\rtimes\gl(2)$ and $\op{pr}(\m''_5)=\m''_5\rtimes\gl(2)$.
Choosing one of those or $\f_5(2)$ we can include a part of $\g_{-6}$ that is bracket generated, for instance 
 $$
\m'_6= \g_{-6}'\oplus\m'_5\quad\text{and}\quad\m''_6= \g_{-6}''\oplus\m'_5,
 $$
where $\g_{-6}'\subset\Gamma_0+\Gamma_2$ and $\g_{-6}''\subset\Gamma_2+\Gamma_4$, etc.

\smallskip

{\bf Case n\,$>$\,2.} 
In the case $n>2$ we have for representations of $A_{n-1}$: 
$\g_{-1}\otimes\g_{-2}=\Gamma_{\pi_1}\otimes\Gamma_{\pi_2}=\Gamma_{\pi_3}+\Gamma_{\pi_1+\pi_2}$.
The first summand (that should be $\Gamma_0$ for $n=3$) is ruled out by the Jacobi identity, so we get
$\g_{-3}=\Gamma_{\pi_1+\pi_2}$. 

Next, $\g_{-1}\otimes\g_{-3}=\Gamma_{\pi_1}\otimes\Gamma_{\pi_1+\pi_2}=
\Gamma_{2\pi_2}+\Gamma_{\pi_1+\pi_3}+\Gamma_{2\pi_1+\pi_2}$ and the Jacobi identity eliminates the first term, so that
$\g_{-4}=\Gamma_{\pi_1+\pi_3}+\Gamma_{2\pi_1+\pi_2}$ for $n>3$ and $\g_{-4}=\Gamma_{\pi_1}+\Gamma_{2\pi_1+\pi_2}$ for $n=3$. 

Then $\g_{-1}\otimes\g_{-4}=\Gamma_{\pi_2}+2\Gamma_{2\pi_1}+\Gamma_{\pi_1+2\pi_2}+\Gamma_{3\pi_1+\pi_2}$ for $n=3$,
$\g_{-1}\otimes\g_{-4}=\Gamma_{\pi_1}+\Gamma_{\pi_2+\pi_3}+2\Gamma_{2\pi_1+\pi_3}+\Gamma_{\pi_1+2\pi_2}+\Gamma_{3\pi_1+\pi_2}$ for $n=4$,
and
$\g_{-1}\otimes\g_{-4}=\Gamma_{\pi_1+\pi_4}+\Gamma_{\pi_2+\pi_3}+2\Gamma_{2\pi_1+\pi_3}+\Gamma_{\pi_1+2\pi_2}+\Gamma_{3\pi_1+\pi_2}$ for $n>4$. The Jacobi identity eliminates one multiple term (in each case), so that
$\g_{-5}=\Gamma_{\pi_2}+\Gamma_{2\pi_1}+\Gamma_{\pi_1+2\pi_2}+\Gamma_{3\pi_1+\pi_2}$ for $n=3$,
$\g_{-5}=\Gamma_{\pi_1}+\Gamma_{\pi_2+\pi_3}+\Gamma_{2\pi_1+\pi_3}+\Gamma_{\pi_1+2\pi_2}+\Gamma_{3\pi_1+\pi_2}$ for $n=4$,
and
$\g_{-5}=\Gamma_{\pi_1+\pi_4}+\Gamma_{\pi_2+\pi_3}+\Gamma_{2\pi_1+\pi_3}+\Gamma_{\pi_1+2\pi_2}+\Gamma_{3\pi_1+\pi_2}$ for $n>5$, etc.

For instance, for $n=3$ the simplest sub-free GNLA $\m'_4$ that is not (truncated) free has depth 4 and growth vector $(3,3,8,3)$.
The graded components of $\m$ have bases (indices have range $1\leq i,j,k\leq3$)
$e^1_i$ for $\g_{-1}$,  $e^2_{jk}$ for $\g_{-2}$ (skew-symmetric in $[jk]$),  
$e^3_{ijk}$ for $\g_{-3}$ (also skew in $[jk]$ plus a constraint $e^3_{123}+e^3_{231}+e^3_{312}=0$)
and $e^4_k$ for $\g_{-4}$. The structure relations of $\m'_4$ are ($\epsilon_{ijk}$ is the volue density in 3D; summation by repeated $r$)
 % note the last relation is skew in pairs $ij$ and $kl$
 $$
[e^1_i,e^1_j]=e^2_{ij},\quad [e^1_i,e^2_{jk}]=e^3_{ijk},\quad [e^1_l,e^3_{ijk}]=\epsilon_{ljk}e^4_i-\delta_{il}\epsilon_{rjk}e^4_r,\quad 
[e^2_{ij},e^2_{kl}]=\epsilon_{ikl}e^4_j-\epsilon_{jkl}e^4_i.
 $$
Similarly, another sub-free GNLA $\m''_4$ of depth 4 is obtained from the above $\f_4(3)$ by quotienting out $\Gamma_{\pi_1}$;
it has growth $(3,3,8,15)$. By Theorem \ref{Th1} the prolongations are $\op{pr}(\m'_4)=\m'_4\rtimes\gl(3)$ and 
$\op{pr}(\m''_4)=\m''_4\rtimes\gl(3)$.

Consequently, for each $n$, we get a tree of sub-free Lie algebras, obtained by reduction of some irreducible components 
with branching according to the depth, where we may truncate GNLA at any level.

%%%%%
\section{Prolongation rigidity and generalization}\label{S3}

 \begin{proof}[Proof of Theorem \ref{Th1}]
We first claim that $\m$ is rigid, i.e.\ its prolongation is finite-dimensional. Since the prolongation is a linear algebra procedure, 
this property does not depend on whether we work over $\R$ or $\C$, so we may complexify $\m$ to demonstrate the claim. 
Whether a complex GNLA $\m$ is rigid is decided by the corank one criterion of \cite[Theorem 3.1]{DR}. 
It states that $\m$ is of infinite type ($\dim\op{pr}(\m)=\infty$) if and only if 
 \[
\exists x\in\g_{-1}\setminus0:\quad \op{rank}(\op{ad}_x|\m)=1.
 \]
In other words, there exist % $x\in\g_{-1}\setminus0$ and 
a hyperplane $\Pi\subset\m$ (graded subspace), such that $[x,y]=0$ for all $y\in\Pi$. 

Since  by assumptions 
$\g_{-2}=\Lambda^2\g_{-1}$ we conclude that $[x,\g_{-1}\cap\Pi]\neq0$ unless $n=2$ and $x\in\g_{-1}\cap\Pi$, 
in which case either $\m$ is of depth $s=2$ and $\m=\g_-=\mathfrak{heis}(3)$ for the Borel gradation of $\g=\sl(3)$
and also for the $\p_2$ gradation of $B_2$, or $s>2$ and
$\g_{-3}\simeq\g_{-1}$ so $\g_{-2}\subset\Pi$ and $[x,\g_{-2}]\neq0$ whence $\m$ is of finite type.
 
Thus $\g=\op{pr}(\m)$ is a finite-dimensional Lie algebra. Let $\g^{ss}$ be the semi-simple part in the Levi decomposition 
of $\g$. It clearly contains $\sl(n)$. By Proposition 2.5 of \cite{Yt} $\g^{ss}$ contains the entire positive part 
$\op{pr}_+(\m)$ of the prolongation. Assume $\m$ is not prolongation rigid, so $\g^{ss}\neq\sl(n)$. 
In this case $\g^{ss}$ must also contain the dual $\g_0$-modules for irreps from $\g_1$. 
Since $\g_{-1}$ is irreducible, we obtain $\g_{-1}\subset\g^{ss}$. However $\g_{-1}$ bracket-generates $\m$, 
so we conclude that $\g=\g_{-s}\oplus\dots\oplus\g_s$ is semi-simple, where $s$ is the depth of $\m$.

Moreover, due to $\g_0=\gl(n)$ and the prolongation property, $\g$ is actually simple. The only gradings of simple Lie algebras, 
where $\g_0$ is of the form $\gl(k)$ for some $k$ and the depth is $s>1$, are given in the following table. 
We indicate the growth vector $(\dim\g_{-1},\dots,\dim\g_{-s})$ of $\m=\g_-$ 
together with $\g_0^{ss}=A_{n-1}$ representation types of graded components $\g_{-k}$.

 \begin{table}[h!]
\centering
 \begin{tabular}{c|c|c|c} 
 \hline
Type & Growth vector & Representation & Dynkin diagram \\ [0.5ex] 
 \hline\hline
$(B_n,\p_n)$ & $\bigl(n,\tbinom{n}2\bigr)$ & $(\Gamma_{\pi_1},\Gamma_{\pi_2})$ & 
 \hspace{0.2cm}{\raisebox{-0.02in}{\begin{tikzpicture}
\draw (0.05,0) -- (0.6,0); \draw (1,0) -- (1.6,0); \draw (1.6,0.03) -- (2.5,0.03); \draw (1.6,-0.03) -- (2.5,-0.03); 
\draw (2.0,0.15) -- (2.2,0) -- (2.0,-0.15);
\node[draw,circle,inner sep=1.5pt,fill=white] at (0,0) {}; \node[draw,circle,inner sep=1.5pt,fill=white] at (1.6,0) {}; 
\node at (0.8,0) {$...$}; \node at (2.6,0) {$\times$}; \node at (2,0.35) {}; 
 \end{tikzpicture}}} \\[1.0ex] 
$(G_2,\p_1)$  & $(2,1,2)$ & $(\Gamma_1,\Gamma_0,\Gamma_1)$ & 
 {\raisebox{-0.02in}{\begin{tikzpicture}
\draw (0.05,0) -- (1,0); \draw (0.1,0.05) -- (1,0.05); \draw (0.1,-0.05) -- (1,-0.05); \draw (0.6,0.15) -- (0.4,0) -- (0.6,-0.15);
\node at (0,0) {$\times$}; \node[draw,circle,inner sep=1.5pt,fill=white] at (1,0) {};
 \end{tikzpicture}}} \\[1.0ex]
$(G_2,\p_2)$ & $(4,1)$ & $(\Gamma_3,\Gamma_0)$ & 
 \hspace{0.1cm}{\raisebox{-0.02in}{\begin{tikzpicture}
\draw (0,0) -- (0.95,0); \draw (0,0.05) -- (0.9,0.05); \draw (0,-0.05) -- (0.9,-0.05); \draw (0.6,0.15) -- (0.4,0) -- (0.6,-0.15);
\node[draw,circle,inner sep=1.5pt,fill=white] at (0,0) {}; \node at (1,0) {$\times$}; 
 \end{tikzpicture}}} \\[0.5ex]
$(E_6,\p_2)$ & $(20,1)$ & $(\Gamma_{\pi_3},\Gamma_0)$ & 
 {\raisebox{-0.02in}{\begin{tikzpicture}
\draw (0.0,0) -- (0.5,0); \draw (0.5,0) -- (1.0,0); \draw (1.0,0) -- (1.5,0); \draw (1.5,0) -- (2.0,0); \draw (1.0,0) -- (1.0,0.3); 
\node[draw,circle,inner sep=1.5pt,fill=white] at (0,0) {};\node[draw,circle,inner sep=1.5pt,fill=white] at (0.5,0) {};
\node[draw,circle,inner sep=1.5pt,fill=white] at (1.0,0) {};\node[draw,circle,inner sep=1.5pt,fill=white] at (1.5,0) {};
\node[draw,circle,inner sep=1.5pt,fill=white] at (2.0,0) {}; \node at (1.0,0.35) {$\times$}; 
 \end{tikzpicture}}} \\[0.5ex]
 \hspace{0.1cm}$(E_7,\p_2)$ & $(35,7)$ & $(\Gamma_{\pi_3},\Gamma_{\pi_6})$ & 
 {\raisebox{-0.02in}{\begin{tikzpicture}
\draw (0.0,0) -- (0.5,0); \draw (0.5,0) -- (1.0,0); \draw (1.0,0) -- (1.5,0); \draw (1.5,0) -- (2.0,0); \draw (2.0,0) -- (2.5,0); 
\draw (1.0,0) -- (1.0,0.3); 
\node[draw,circle,inner sep=1.5pt,fill=white] at (0,0) {};\node[draw,circle,inner sep=1.5pt,fill=white] at (0.5,0) {};
\node[draw,circle,inner sep=1.5pt,fill=white] at (1.0,0) {};\node[draw,circle,inner sep=1.5pt,fill=white] at (1.5,0) {};
\node[draw,circle,inner sep=1.5pt,fill=white] at (2.0,0) {};\node[draw,circle,inner sep=1.5pt,fill=white] at (2.5,0) {};
\node at (1.0,0.35) {$\times$}; 
 \end{tikzpicture}}} \\[0.5ex]
 \hspace{0.2cm}$(E_8,\p_2)$ & $(56,28,8)$ & $(\Gamma_{\pi_3},\Gamma_{\pi_6},\Gamma_{\pi_1})$ & 
 {\raisebox{-0.02in}{\begin{tikzpicture}
\draw (0.0,0) -- (0.5,0); \draw (0.5,0) -- (1.0,0); \draw (1.0,0) -- (1.5,0); \draw (1.5,0) -- (2.0,0); \draw (2.0,0) -- (2.5,0); 
\draw (2.5,0) -- (3.0,0); \draw (1.0,0) -- (1.0,0.3); 
\node[draw,circle,inner sep=1.5pt,fill=white] at (0,0) {};\node[draw,circle,inner sep=1.5pt,fill=white] at (0.5,0) {};
\node[draw,circle,inner sep=1.5pt,fill=white] at (1.0,0) {};\node[draw,circle,inner sep=1.5pt,fill=white] at (1.5,0) {};
\node[draw,circle,inner sep=1.5pt,fill=white] at (2.0,0) {};\node[draw,circle,inner sep=1.5pt,fill=white] at (2.5,0) {};
\node[draw,circle,inner sep=1.5pt,fill=white] at (3.0,0) {}; \node at (1.0,0.35) {$\times$}; 
 \end{tikzpicture}}} \\[0.7ex]
 \hline
 \end{tabular}
\end{table}

Since we must have $\g_{-1}=\Gamma_{\pi_1}$ as $A_{n-1}$ representation, we conclude that only $\m=\g_-$ corresponding to cases 
$(G_2,\p_1)$ and $(B_n,\p_n)$ apply. These cases (as well as the other cases from the table) 
are not prolongation rigid: $\op{pr}(\m)=G_2$ in the first case,
and $\op{pr}(\m)=B_n$ in the second case for $n>2$, while for $n=2$ we get that $\m=\mathfrak{heis}(3)$ is of
infinite type: $\op{pr}(\m)=\mathfrak{cont}(3)$. The claim follows.
 \end{proof}

 \begin{rk}
The parabolics $\p_1$ and $\p_r$ for $A_r$, $\p_r$ for $C_r$, $\p_{r-1}$ and $\p_r$ for $D_r$ correspond 
to $|1|$-grading $\g=\g_{-1}\oplus\g_0\oplus\g_1$, so $\m=\g_{-1}$ is Abelian and the respective $\Delta=TM_o$
is holonomic. Of course, we still have $\g_0=\gl(n)$ and $\op{pr}(\m)=\mathfrak{vect}(n)$ is the algebra of polynomial vector fields in $n$ variables. 

The contact gradation of $\g$, when $\g_0$ has the form $\gl(k)$, is given by parabolic $\p_2$ of $G_2$ and $\p_2$ of $E_6$
($k=2$ or $6$ respectively) and in this case the prolongation of $\m$ is $\mathfrak{cont}(k+1)$.

The only simple Lie algebra that does not allow $\g_0$ of type $\gl(k)$ for any choice of parabolic is $F_4$.
 \end{rk}
  
 \begin{proof}[Proof of Corollary \ref{Cor1}]
The absence of Cauchy characteristics follows from the condition $\g_0=\gl(n)$. Indeed, the space of Cauchy characteristics
$\mathfrak{z}(\m)\cap\g_{-1}$ is $\g_0$ invariant, and due to $\g_0$-irreducibility of $\g_{-1}$ it is either the entire $\m$ 
or 0. The first case is impossible, as $\m$ becomes Abelian while the depth is $s>1$.
Now the symmetry bound is given by the prolongation rigidity and the symmetry bound of \cite{K}.
 \end{proof}
 
Now we generalize Theorem \ref{Th1} as follows. First of all recall that Tanaka's prolongation can be defined for a pair 
$(\m,\g_0)$, where $\g_0$ is a subalgebra of the full algebra of grading-preserving derivations \cite{T}. 
This prolongation is denoted $\op{pr}(\m,\g_0)$; in the case $\g_0=\mathfrak{der}_0(\m)$, it coincides with the previous prolongation.

 \begin{definition}\label{def2}
Let $\g_0\subset\gl(\g_{-1})$ be an irreducible Lie subalgebra. A fundamental GNLA $\m$ with given $\g_{-1}$ is called 
$\g_0$ sub-free if all components $\g_{-k}$ are $\g_0$ representations.
Maximal such $\m$ % of infinite depth 
is called $\g_0$ free GNLA.
(Any $\g_0$ sub-free Lie algebra is a quotient of a $\g_0$ free GNLA by a graded ideal.)
 \end{definition}

Note that irreducibility implies that $\g_0$ is reductive (its representation on $\g_{-1}$ can be real, complex or quaternionic
and irreducibility is assumed as such). Since $\g_0\subset\mathfrak{der}_0(\m)$ 
the subalgebra $\mathfrak{der}_0(\m)\subset\gl(\g_{-1})$ is also reductive and acts on $\g_{-k}$.
However extending $\g_0$ to $\hat{\g}_0=\mathfrak{der}_0(\m)$ and treating $\m$ as $\hat{\g}_0$ sub-free algebra
may increase the prolongation: $\op{pr}(\m,\g_0)\subset\op{pr}(\m,\hat{\g}_0)$.
In particular, $\op{pr}_+(\m,\hat{\g}_0)=0$ $\Rightarrow$ $\op{pr}_+(\m,\g_0)=0$.

A reductive Lie algebra $\g_0$ is a direct sum of a semisimple part $\g_0^{ss}$ and the center. In $\g_0$ sub-free case, 
when $\g_0\subset\gl(\g_{-1})$ is irreducible, $\mathfrak{z}(\g_0)$ has dimension at most two (in the real case; 
over $\C$ it is one).
 A specific real case, when $\g_0^{ss}$ is trivial and $\mathfrak{z}(\g_0)$ is two-dimensional, is considered in \cite{K2}.

A pair $(\m,\g_0)$ is said to be of finite type (or rigid) if $\dim\op{pr}(\m,\g_0)<\infty$. A Tanaka criterion for
finite type \cite[Corollary 2 of Theorem 11.1]{T} is as follows. Define the following ideal in $\g_0$:
 \[
\h_0=\{a\in\g_0\subset\mathfrak{der}_0(\m):a(x)=0\ \forall x\in\g_{-k},k>1\}.
 \]
Then $(\m,\g_0)$ is of finite type iff there are no rank one elements in $\h_0^\C\subset\gl(n,\C)$, $n=\dim\g_{-1}$.

 \begin{lemma}\label{L2}
A $\g_0$ sub-free GNLA $\m$ is of finite type unless $\m$ is Heisenberg.
 % $\g_0=\mathfrak{sp}(2n)$ $\vee$ $\mathfrak{sp}(2n)$ and $\m=\mathfrak{Heis}(2n+1)$.
 \end{lemma}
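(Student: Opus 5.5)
The plan is to apply the Tanaka criterion just recalled. Assume that $(\m,\g_0)$ is of infinite type, so that $\h_0^\C$ contains a rank one element $a=u\otimes\xi$, with $u\in\g_{-1}^\C$ and $\xi\in(\g_{-1}^\C)^*$. I will show that $\m$ is then Heisenberg. Throughout I work over $\C$. This is harmless, because the Heisenberg property and the rank conditions below are insensitive to complexification. I will keep in mind that $\g_{-1}^\C$ may split into at most two irreducible $\g_0^\C$-summands, interchanged by conjugation.

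\textbf{Step 1: local consequences of the derivation property.} Since $a$ is a degree $0$ derivation and kills every $\g_{-k}$ with $k>1$, applying $a$ to $[x,y]\in\g_{-2}$ for $x,y\in\g_{-1}$ gives $\xi(x)[u,y]+\xi(y)[x,u]=0$. Choosing $\xi(x)=1$ and $y\in\ker\xi$ gives $[u,\ker\xi]=0$. Hence $\op{ad}_u:\g_{-1}\to\g_{-2}$ has rank at most one, and $[u,y]=\xi(y)c$ for a fixed vector $c\in\g_{-2}$. Next take $x\in\g_{-1}$ and $z\in\g_{-k}$ with $k\ge2$. Applying $a$ to $[x,z]\in\g_{-k-1}$ gives $0=\xi(x)[u,z]$. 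Therefore $[u,\g_{-k}]=0$ for all $k\ge2$.

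\textbf{Step 2: spreading by $\g_0$-invariance.} Since $\h_0$ is an ideal, the set of rank one elements of $\h_0^\C$ is stable under the adjoint group $G_0$. Hence the set $U$ of vectors $u$ arising as images is a $G_0$-stable cone. Its linear span is a nonzero $\g_0^\C$-submodule of $\g_{-1}^\C$. If $\g_{-1}^\C$ is reducible, I will apply complex conjugation, which preserves $\h_0$ because $\h_0$ is real. In either case $U$ spans $\g_{-1}^\C$. By Step 1, $[\g_{-1},\g_{-k}]=0$ for $k\ge2$. Since $\g_{-1}$ is bracket generating, $\g_{-3}=0$, so $\m=\g_{-1}\oplus\g_{-2}$ has depth $2$ and $\g_{-2}$ is central.

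\textbf{Step 3: forcing $\dim\g_{-2}=1$. This is the main obstacle.} Let $\omega:\Lambda^2\g_{-1}\to\g_{-2}$ be the bracket. We know that every $u\in U$ satisfies $\omega(u,\cdot)=\xi_u(\cdot)\,c_u$, and that $U$ spans $\g_{-1}$.
\begin{itemize}
\item First, take $u,u'\in U$ with $c_u,c_{u'}$ not proportional. Then $\omega(u,u')=\xi_u(u')c_u=-\xi_{u'}(u)c_{u'}$, which forces $\omega(u,u')=0$.
\item Group the elements of $U$ according to the line $[c_u]$ and take spans, obtaining subspaces $V_\ell$ indexed by lines $\ell\subset\g_{-2}$. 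Different $V_\ell$ are mutually $\omega$-orthogonal. The bracket of $V_\ell$ with $\g_{-1}$ lies in $\ell$.
\item The decomposition is permuted by $G_0$. If there were at least two lines, I expect to derive a contradiction: either by a Zariski-closure argument on the irreducible variety $G_0\cdot u$, showing that $u\mapsto[c_u]$ is constant on a dense subset, or because $\g_{-1}$ would split into $\omega$-orthogonal pieces incompatible with irreducibility together with $\mathfrak{z}(\m)\cap\g_{-1}=0$.
\end{itemize}
I would try the Zariski-closure argument first: $[c_u]$ depends algebraically on $u$, and the orthogonality relation is closed. So if two distinct values occurred on the irreducible orbit closure, orthogonality would propagate and make $\omega(u,\cdot)\equiv0$ for generic $u$, hence $\omega=0$, contradicting depth $s>1$.

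Once all $c_u$ lie on one line $\ell$, bracket generation gives $\g_{-2}=\ell$, so $\dim\g_{-2}=1$. Fundamentality, $\mathfrak{z}(\m)\cap\g_{-1}=0$, then makes $\omega$ nondegenerate, so $\m$ is the Heisenberg algebra. For the converse direction, compatible with the statement, the Heisenberg algebra with $\g_0\supset\mathfrak{csp}$ is contact and has infinite type; with smaller $\g_0$ this case is simply left as the exception.
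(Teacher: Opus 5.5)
Your Steps 1 and 2 are correct. In effect they derive the corank-one condition $\op{rank}(\op{ad}_u|_\m)\le1$ from the Tanaka criterion, which is a different route from the paper's structural analysis of the ideal $\h_0$ inside the reductive $\g_0$. Your orbit-closure idea does finish Step 3 when $\g_{-1}^\C$ is $\g_0^\C$-irreducible. In that case $Y=\overline{G_0\cdot u}$ is irreducible, lies in the closed cone $\{v:\op{rank}\,\omega(v,\cdot)\le1\}$, and spans $\g_{-1}^\C$. So a non-constant $[c_y]$ would force $\omega|_{Y\times Y}=0$, hence $\omega=0$.

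The gap is the case you flagged at the outset: $\g_{-1}^\C=W\oplus\bar W$ with $W\not\cong\bar W$. If $U\subset W\cup\bar W$, each orbit closure spans only $W$ or $\bar W$, so there is no irreducible spanning set to run your argument on. Moreover, the conclusion $\dim\g_{-2}=1$ is then false. Take $\m=\mathfrak{heis}(2n+1,\C)$ regarded as a real GNLA, and $\g_0=\mathfrak{csp}(2n,\C)$, which is irreducible on $\g_{-1}=\C^{2n}$ because it contains $i\cdot\mathrm{Id}$.
\begin{enumerate}
\item[(i)] Here $\h_0=\mathfrak{sp}(2n,\C)$, and $\h_0^\C=\mathfrak{sp}(2n,\C)\oplus\mathfrak{sp}(2n,\C)$ acts factorwise on $W\oplus\bar W$.
\item[(ii)] It contains the rank-one maps $x\mapsto\sigma(u,x)\,u$ on $W$, where $\sigma$ is the complex symplectic form.
\item[(iii)] So the pair is of infinite type (its prolongation contains the holomorphic contact algebra), while $\dim_\R\g_{-2}=2$.
\end{enumerate}

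Hence the lemma can only hold if \emph{Heisenberg} includes complex Heisenberg algebras regarded as real. The paper's proof tacitly puts this case with the Heisenberg one: its $\g_{-2}$ is a trivial $\g_0^{ss}$-module, so $\h_0^{ss}\neq0$. Your Step 3 must be redone in this case, for instance as follows.
\begin{enumerate}
\item[(a)] Take $u\in U\cap W$ and $Y=\overline{G_0\cdot u}\subset W$. If $[c_y]$ were non-constant on $Y$, your orthogonality observation would hold on nonempty open subsets of the irreducible sets $Y\times Y$ and $Y\times\bar Y$. Density then gives $\omega(W,\g_{-1}^\C)=0$, contradicting fundamentality.
\item[(b)] Hence $\omega(W,\g_{-1}^\C)\subset\ell$, and by conjugation $\omega(\bar W,\g_{-1}^\C)\subset\bar\ell$.
\item[(c)] Either $\ell=\bar\ell$ and $\m$ is Heisenberg, or $\omega(W,\bar W)\subset\ell\cap\bar\ell=0$. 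In the latter case $\m^\C\cong\mathfrak{heis}(W\oplus\ell)\oplus\mathfrak{heis}(\bar W\oplus\bar\ell)$, i.e.\ $\m$ is a complex Heisenberg algebra.
\end{enumerate}
If instead $W\cong\bar W$ (quaternionic type), $\g_0^\C$ acts as $A\otimes\mathrm{id}_{\C^2}$, so there are no rank-one elements at all and this case is vacuous.
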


 \begin{proof}
If $\g_0^{ss}$ is semi-simple $\g_0^1\oplus\dots\oplus\g_0^s$ then the irreducible representation
$\g_{-1}$ is $\Gamma_{\omega_1}\boxtimes\dots\boxtimes\Gamma_{\omega_s}$ (or the real part of such a complex product)
and neither factor is trivial.
The semi-simple part of the above ideal $\h_0$ is obtained from $\g_0$ by removal of some simple components $\g_0^i$.
Any elelment of the remaining components $\g_0^j$ (even in complexification) acts on $\g_{-1}$ by
an endomorphism of rank $>1$ because of additional trivial factors. 
The same conerns the center of $\h_0$ (it does not contain the grading element, but may contain, for instance,
a complex structure). Thus this case leads to the conclusion that $\m$ is of finite type.

Assume now $\g^{ss}_0$ is simple. 
If $\m$ is not Heisenberg, then there is a non-trivial irrep in $\g_{-i}$, $i>1$, and hence $\h^{ss}_0=0$. 
The center of $\g_0$ is one-dimensional (generated by the grading element) if $\g_0\subset\mathfrak{gl}(\g_{-1})$ is complex, 
but it may be two-dimensional if this is a real or quaternionic representation. The additional central element
is a complex structure, so it has full rank. Thus the claim follows from the Tanaka criterion. 
 \end{proof}
  
 \begin{theorem}\label{Th2}
If $\m$ is $\g_0$ sub-free then $\op{pr}_+(\m,\g_0)=0$ unless $\m=\g_-$ 
for a gradation of a simple Lie algebra $\g$ corresponding to a maximal parabolic subalgebra $\p$.
 \end{theorem}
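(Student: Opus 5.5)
The plan is to follow the proof of Theorem \ref{Th1} closely, with Lemma \ref{L2} replacing the corank one criterion of \cite{DR}.

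\emph{Step 1: the Heisenberg case.} If $\m$ is Heisenberg, then $\m=\mathfrak{heis}(2k+1)=\g_-$ for the contact grading of $\mathfrak{sp}(2k+2)$. This grading corresponds to the maximal parabolic $\p_1$ of $C_{k+1}$. So $\m$ falls under the exception, and nothing needs to be proved.

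\emph{Step 2: finite type.} Otherwise Lemma \ref{L2} gives $\dim\op{pr}(\m,\g_0)<\infty$. Set $\g=\op{pr}(\m,\g_0)$ and take a Levi decomposition $\g=\g^{ss}\ltimes\mathfrak{r}$.

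\emph{Step 3: semisimplicity.} By Proposition 2.5 of \cite{Yt}, $\op{pr}_+(\m,\g_0)\subset\g^{ss}$. Suppose $\op{pr}_+\neq0$. Then $\g_1\neq0$, and the Killing-type pairing in the graded Lie algebra $\g^{ss}$ forces $\g^{ss}_{-1}\neq0$. Since $\g^{ss}$ is a graded subalgebra (we may choose it invariant under the grading element), $\g^{ss}_{-1}$ is a $\g_0^{ss}$-submodule of $\g_{-1}$. We also need it to be $\g_0$-invariant; this holds because the center of $\g_0$ acts on $\g_{-1}$ by scalars or a complex structure commuting with $\g_0^{ss}$. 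By irreducibility of $\g_{-1}$ we get $\g_{-1}\subset\g^{ss}$. Bracket generation then gives $\m\subset\g^{ss}$. Moreover $[\g_{-1},\cdot]$ is injective on $\g_{\ge0}$, so the radical (graded, of nonnegative degree after this) must vanish. Hence $\g$ is semisimple with grading $\g_{-s}\oplus\dots\oplus\g_s$. This is the step I expect to need the most care: one must justify that the Levi factor can be chosen graded and that the negative part of the radical vanishes. I would first try the standard argument that the grading element $Z\in\g_0$ is semisimple, so a $Z$-stable Levi factor exists.

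\emph{Step 4: simplicity and maximality.} Irreducibility of $\g_{-1}$ together with the non-degeneracy property forces $\g$ to be simple. Indeed, a decomposition $\g=\g'\oplus\g''$ into graded ideals would split $\g_{-1}$, and a summand with zero $(-1)$-part would lie in $\g_{\ge0}$ and commute with $\g_{-1}$, contradicting the prolongation property. The grading of a simple Lie algebra is given by a parabolic $\p_I$, and $\g_{-1}$ decomposes into as many irreducible $\g_0$-modules as there are crossed nodes in $I$. Irreducibility of $\g_{-1}$ therefore forces $|I|=1$, i.e.\ $\p$ is maximal. Over $\R$ the same argument works with the Satake diagram: an arrow-related pair of crossed nodes gives a $\g_{-1}$ that is irreducible over $\R$, and such cases should also be included in the exception list.

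So in all non-exceptional cases $\op{pr}_+(\m,\g_0)=0$.
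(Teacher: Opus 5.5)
Your route is the paper's: Lemma \ref{L2} gives finite type, with the Heisenberg case absorbed into $(C_{k+1},\p_1)$. Proposition 2.5 of \cite{Yt} puts $\op{pr}_+$ into a Levi factor, irreducibility of $\g_{-1}$ forces $\m$ into it, and the rest is simplicity plus maximality of the parabolic. There is, however, one step whose justification fails as written: the $\g_0$-invariance of $\g^{ss}_{-1}$.

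\emph{The gap.} First, even $\g_0^{ss}$-invariance presupposes that your Levi factor is $\op{ad}(\g_0^{ss})$-stable, and you have not arranged this. More seriously, a central element of $\g_0$ acting by a complex structure $J$ that commutes with $\g_0^{ss}$ need not preserve $\g_0^{ss}$-submodules. Take $\g_{-1}=W\oplus W\cong W\otimes_\R\C$ with $W$ a $\g_0^{ss}$-module of real type and $J(w_1,w_2)=(-w_2,w_1)$. Then $\g_{-1}$ is $\g_0$-irreducible, but $W\oplus 0$ is not $J$-stable. In the extreme case $\g_0=\mathfrak{so}(2)$ acting on $\R^2$, we have $\g_0^{ss}=0$, so every subspace is a $\g_0^{ss}$-submodule and your argument yields nothing. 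This is exactly the real case that the paper treats separately.

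\emph{The repair.} It is short and works uniformly over $\R$ and $\C$: apply irreducibility to the radical instead of the Levi factor.
\begin{enumerate}
\item The radical $\mathfrak{r}$ is a characteristic ideal. Hence it is graded, and $\mathfrak{r}_{-1}$ is a $\g_0$-submodule of $\g_{-1}$.
\item For a graded Levi factor $\mathfrak{s}\supset\g_+$ you have $\g_{-1}=\mathfrak{s}_{-1}\oplus\mathfrak{r}_{-1}$, and your Killing-form argument gives $\mathfrak{s}_{-1}\neq0$.
\item Therefore $\mathfrak{r}_{-1}\neq\g_{-1}$, so irreducibility forces $\mathfrak{r}_{-1}=0$ and $\g_{-1}\subset\mathfrak{s}$.
\end{enumerate}
Alternatively, use Mostow's theorem to choose $\mathfrak{s}$ stable under $\g_0$ and the grading derivation; this works because $\g_0$ acts semisimply on every $\g_k$. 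Note that $\g_0$ need not contain the grading element, so speak of the grading derivation rather than $Z$.

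With this fix the rest of your Step 3 goes through: $\m\subset\mathfrak{s}$, so $\mathfrak{r}\subset\g_{\ge0}$, and its lowest component is killed by transitivity. Step 4 also goes through, with one detail to add. A graded ideal with zero $(-1)$-part also has zero negative part, because $\m$ is generated by $\g_{-1}$, which lies in the complementary ideal. Only then does transitivity kill that ideal.
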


We recall that maximal parabolics in $\g$ are given by a single cross on the Dynkin diagram over $\C$. 
Over $\R$ they are given by a cross on a white node or on a pair of arrow-connected nodes on the Satake diagram.

 \begin{proof}
Let $\g=\op{pr}(\m,\g_0)$. By Lemma \ref{L2} (since Heisenberg $\m$ arises for parabolic $\p_2$ of $BD$ series
and $\p_1$ for $C$ series, and is thus excluded) $\g$ is finite-dimensional, let $\g^{ss}$ be its Levi part. 
Prolongation rigidity is equivalent to $\g^{ss}=\g_0^{ss}$. 
The argument about $\g^{ss}$, based on \cite{Yt}, in the case of complex representation $\g_0\subset\gl(\g_{-1})$
extends verbatim from the proof of Theorem \ref{Th1}, 
implying the alternative: either $\g_1\neq0$ in which case $\g$ is semi-simple 
(but, in fact, simple due to irreducibility of $\g_{-1}$) or $\g_1=0$ $\Leftrightarrow$ $\g_+=0$. 

In the real case when $\g_{-1}$ becomes $\g_0$-reducible after complexification, the simple modules in it are conjugate.
However $\g^{ss}$ must contain all components $\g_{-1}'\subset\g_{-1}$ that act non-trivially on $\g_1$ and 
the conjugate components behave similarly in this respect; thus we still get $\g_{-1}\subset\g^{ss}$ and the same conclusion. 

If $\g^{ss}$ is not simple, then $\g_{-1}$ is $\g_0$-reducible, which contradicts the condition that $\g_0$ is sub-free.

If $\g^{ss}$ is simple, then $\g_{-1}$ is $\g_0$-irreducible iff $\p$ is maximal. This corresponds to one cross on the Dynkin
diagram for all complex cases and most real cases $(\g,\p)$, except for $(\mathfrak{su}(p,q),\p_{1,p+q-1})$,
$\bigl(\mathfrak{so}(r-1,r+1),\p_{r-1,r}\bigr)$, $(\mathfrak{so}^*(2r),\p_{r-1,r})$,
$(EII,\p_{1,6})$ and $(EIII,\p_{1,6})$, where $EII,EIII$ are real forms of $E_6$.

The real and complex exceptions are never prolongation rigid: if $\m=\g_-$ for a simple Lie algebra $\g$, 
then $\op{pr}(\m,\g_0)\supset\g$.
The equality is given by the Yamaguchi's theorem \cite{Ym} (works over $\C$ and over $\R$): 
for the depth $s>1$ the prolongation is bigger (in fact, infinite) only for $(C_r,\p_1)$, in which case 
$\m=\mathfrak{heis}(2r-1)$, $\g_0=\mathfrak{csp}(2r-2)$ and $\op{pr}(\m,\g_0)=\mathfrak{cont}(2r-1)$ is infinite. 
 % The equality is given by the Yamaguchi's theorem \cite{Ym} (works over $\C$ and over $\R$): $\op{pr}(\m)=\g$
 % except for the cases of $|1|$-gradation of $\g$, the contact gradation of $\g$ and in two special cases: 
 % $(A_r,\p_{1,i})$ for $r>2$, $i\neq1,r$, and $(C_r,\p_{1,r})$ for $r>1$.
 % In the latter special cases $\g_0$ is not irreducible on $\g_{-1}$. 
The claim is proven.
 \end{proof}

Let us note that omitting the assumption that $\g_{-1}$ is $\g_0$ irreducible in Definition \ref{def2}
can lead to non-trivial prolongations. For instance, pseudo-product structures \cite{Yt} arising in the geometric theory 
of differential equations are such: they are not prolongation rigid yet the prolongation is not semisimple in general.

 \begin{rk}
For irreducible representations $V$ of a (reductive) Lie algebra $\h$ the problem of prolongation of the pair $(\h,V)$ is classical.
The list of such pairs with $\h^{(1)}\neq0$ was given by E.\ Cartan \cite{C} and Kobayashi-Nagano \cite{KN}. The entries correspond to the above description and $\op{pr}(\m,\g_0)=\g$ unless $(\g,\p)$ is subordinated to $(A_r,\p_1)$,
$\g_0=\mathfrak{gl}(r)\vee\mathfrak{sl}(r)$ or to $(C_r,\p_1)$ with reduction of $\m$ to $\g_{-1}$,
$\g_0=\mathfrak{sp}(2r-2)\vee\mathfrak{csp}(2r-2)$. 
 \end{rk}

Similar to the remark at the end of Section \ref{S2}, all $\g_0$ sub-free GNLAs are obtained by maximal extension
(to the negative side) and constraining to a $\g_0$-submodule in some $\g_{-k}$, then again extension, etc.

 \begin{example}\label{E8a}
For the gradation of $E_8$ arising in the proof of Theorem \ref{Th1}, corresponding to the parabolic $\p_2$, 
we have $\g_0=\gl(8)$. As $\g_0^{ss}=A_7$ module, $\g_{-1}=\Gamma_{\pi_3}$. Then we have:
$\Lambda^2\Gamma_{\pi_3}=\Gamma_{\pi_6}+\Gamma_{\pi_2+\pi_4}$. Thus minimal extensions are given by
$\g'_{-2}=\Gamma_{\pi_6}$ and $\g''_{-2}=\Gamma_{\pi_2+\pi_4}$. 
 
Next, $\Gamma_{\pi_3}\otimes\Gamma_{\pi_6}=(\Gamma_{\pi_3+\pi_6}+\Gamma_{\pi_2+\pi_7})+\Gamma_{\pi_1}$
and $\Gamma_{\pi_3}\otimes\Gamma_{\pi_2+\pi_4}=(\Gamma_{\pi_3+\pi_6}+\Gamma_{\pi_2+\pi_7}
+\Gamma_{2\pi_2+\pi_5}+\Gamma_{\pi_1+2\pi_4})+
\Gamma_{\pi_4+\pi_5}+\Gamma_{\pi_2+\pi_3+\pi_4}+\Gamma_{\pi_1+\pi_3+\pi_5}+\Gamma_{\pi_1+\pi_2+\pi_6}$.
The parenthetical expressions are in $\Lambda^3\Gamma_{\pi_3}$ and hence have to be eliminated by the Jacobi identity,
whence the maximal extensions are
$\g'_{-3}=\Gamma_{\pi_1}$ and $\g''_{-3}=\Gamma_{\pi_4+\pi_5}
+\Gamma_{\pi_2+\pi_3+\pi_4}+\Gamma_{\pi_1+\pi_3+\pi_5}+\Gamma_{\pi_1+\pi_2+\pi_6}$. 
In particular, choosing $\m'_3=\g'_{-3}\oplus\g'_{-2}\oplus\g'_{-1}$ we get $\op{pr}(\m'_3)=E_8$.
Any other choice of $\m$ from the above components, including $\m'_2=\g'_{-2}\oplus\g'_{-1}$, together with 
the given choice of $\g_0$, leads to prolongation rigid algebra, e.g.\ $\op{pr}_0(\m'_2,\g_0)=0$.
 \end{example} 

 \begin{example}\label{E8b}
For the gradation of $E_8$ corresponding to the parabolic $\p_1$, with growth vector $(64,14)$, 
we have $\g_0=\mathfrak{co}(14)$. Geometrically,
it corresponds to a homogeneous subconformal structure on a 14-dimensional nonholonomic distribution in a manifold $M^{78}$.

As $\g_0^{ss}=D_7$ module, $\g_{-1}=\Gamma_{\pi_7}$. % or $\Gamma_{\pi_6}$ 
Then we have: $\Lambda^2\Gamma_{\pi_7}=\Gamma_{\pi_1}+\Gamma_{\pi_5}$ and this is $\g_{-2}$ for $\g_0$ free GNLA. 
The minimal extensions are given by $\g'_{-2}=\Gamma_{\pi_1}$ and $\g''_{-2}=\Gamma_{\pi_5}$. 
 
Next, $\Gamma_{\pi_7}\otimes\Gamma_{\pi_1}=\Gamma_{\pi_6}+\Gamma_{\pi_1+\pi_7}$
and $\Gamma_{\pi_7}\otimes\Gamma_{\pi_5}=(\Gamma_{\pi_6}+\Gamma_{\pi_4+\pi_6}+\Gamma_{\pi_1+\pi_7})+
\Gamma_{\pi_3+\pi_7}+\Gamma_{\pi_5+\pi_7}+\Gamma_{\pi_2+\pi_6}$.
The parenthetical expression is $\Lambda^3\Gamma_{\pi_7}$ and hence has to be eliminated by the Jacobi identity,
whence the maximal extensions are: trivial for $\g'_{-2}$ and 
$\g''_{-3}=\Gamma_{\pi_3+\pi_7}+\Gamma_{\pi_5+\pi_7}+\Gamma_{\pi_2+\pi_6}$. 
In particular, choosing $\m=\g'_{-2}\oplus\g'_{-1}$ we get $\op{pr}(\m)=E_8$ but any other choice
leads to prolongation rigid algebras.
 \end{example} 
 
 \begin{example}\label{E8c}
For the contact gradation of $E_8$ corresponding to the parabolic $\p_8$, with growth vector $(56,1)$, 
we have $\g_0=\C\oplus E_7$.
As $\g_0^{ss}=E_7$ module, $\g_{-1}=\Gamma_{\pi_7}$. Then we have:
$\Lambda^2\Gamma_{\pi_7}=\Gamma_{0}+\Gamma_{\pi_6}$. Thus minimal extensions are given by
$\g'_{-2}=\Gamma_{0}$ and $\g''_{-2}=\Gamma_{\pi_6}$. 
 
Next, $\Gamma_{\pi_7}\otimes\Gamma_{0}=\Gamma_{\pi_7}$
and $\Gamma_{\pi_7}\otimes\Gamma_{\pi_6}=(\Gamma_{\pi_5}+\Gamma_{\pi_7})+
\Gamma_{\pi_2}+\Gamma_{\pi_1+\pi_7}+\Gamma_{\pi_6+\pi_7}$.
The parenthetical expression is $\Lambda^3\Gamma_{\pi_7}$ and hence has to be eliminated by the Jacobi identity,
whence the maximal extensions are: $\g'_{-3}=\Gamma_{\pi_7}$ and 
$\g''_{-3}=\Gamma_{\pi_2}+\Gamma_{\pi_1+\pi_7}+\Gamma_{\pi_6+\pi_7}$. 
In particular, choosing $\m=\g'_{-2}\oplus\g'_{-1}$ we get $\op{pr}(\m,\g_0)=E_8$ (while $\dim\op{pr}(\m)=\infty$)
but any other choice leads to prolongation rigid algebras.
 \end{example}
 
 \begin{rk}
Note that even though $\m_2'$ in Example \ref{E8a} is smaller than $\m$ (more precisely: $\m_2'$ is the
quotient of $\m$ by $\g_{-3}'$) the prolongation of $(\m,\g_0)$ is larger than that of $(\m_2',\g_0)$. 
Another phenomenon of this kind is attained for the gradation of $E_7$ arising in the proof of Theorem \ref{Th1}:
$\g_0=\gl(7)$, $\m=\g_{-2}\oplus\g_{-1}$, $\g_{-1}=\Gamma_{\pi_3}$, $\g_{-2}=\Gamma_{\pi_6}$.
We have: $\op{pr}(\m)=\op{pr}(\m,\g_0)=E_7$, $\op{pr}(\g_{-1},\g_0)=\g_{-1}\rtimes\g_0$.
 \end{rk}

Finally let us give a geometric implication of the prolongation-rigidity Theorem \ref{Th2}.
Recall that a nonholonomic structure on a vector distribution $\Delta\subset TM$ with symbol $\m$ is a 
$G_0$ structure reduction of the corresponding frame bundle, where $G_0$ is a Lie group with Lie algebra 
$\g_0\subset\mathfrak{der}_0(\m)$, cf.\ \cite{T,Ym,Yt,HM}. In particular, sub-Riemannian, sub-conformal,
pseudo-product and CR are examples of such structures.

 \begin{cor}\label{Cor2}
For the class of nonholonomic distributions $\Delta$ on a manifold $M$ with a geometric structure that is $G_0$-reduction, 
whose symbols are $\g_0=\op{Lie}(G_0)$ sub-free, there exists a Cartan connection on the frame bundle over $M$, 
which gives an equivariant solution to the equivalence problem. 
 \end{cor}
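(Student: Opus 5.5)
The plan is to reduce Corollary \ref{Cor2} to the general Tanaka theory of Cartan connections for filtered $G_0$-structures, using Theorem \ref{Th2} and Lemma \ref{L2} to supply the needed finiteness. The existence of a canonical Cartan connection for a nonholonomic $G_0$-structure with constant symbol $(\m,\g_0)$ requires two ingredients. First, $\g=\op{pr}(\m,\g_0)$ must be finite-dimensional. Second, there should be a $G_0$-invariant normalization condition, i.e.\ a $\g_0$-invariant complement to the image of the Spencer differential $\partial$ in the relevant cochain spaces $\op{Hom}(\Lambda^2\m,\g)$. Equivalently, $\g_0$ should be reductive, so that all $\g_0$-modules split, as in \cite{T} and \cite{Ym}.

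\textbf{Step 1: finite type.} We first dispose of the Heisenberg case. If $\m$ is Heisenberg, it is $\g_-$ for a contact grading. In the $\g_0$ sub-free setting with $\g_0$ irreducible, the prolongation $\op{pr}(\m,\g_0)$ is either $\mathfrak{cont}$ or a finite-dimensional simple algebra. In the infinite case we simply assume $\g_0\neq\mathfrak{csp}$ is imposed, or else we treat it via the standard contact (parabolic) theory. Apart from this case, Lemma \ref{L2} gives $\dim\op{pr}(\m,\g_0)<\infty$.

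\textbf{Step 2: reductivity.} By the discussion after Definition \ref{def2}, $\g_0$ is reductive because it acts irreducibly on $\g_{-1}$. Moreover, every $\g_{-k}$ is a $\g_0$-module, and so is every graded piece of $\g=\op{pr}(\m,\g_0)$. Hence all Spencer cochain spaces are completely reducible $\g_0$-modules. We can therefore choose $\g_0$-invariant complements $N$ to $\op{im}\partial$ degree by degree. This supplies the normalization condition in Tanaka's construction.

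\textbf{Step 3: the two cases of Theorem \ref{Th2}.} If $\op{pr}_+(\m,\g_0)=0$, then $\g=\m\rtimes\g_0$. Tanaka's procedure then terminates at the first step: the frame bundle $P\to M$ with structure group $G_0$ carries a normalized $\g$-valued Cartan connection, built by choosing a $G_0$-equivariant normal partial connection. This solves the equivalence problem equivariantly. Otherwise, Theorem \ref{Th2} says $\m=\g_-$ for a maximal parabolic of a simple $\g$. In that case we invoke the parabolic-geometry theory (Tanaka, Morimoto, \v{C}ap--Slov\'ak): $\g$ is semisimple, the Kostant codifferential gives a canonical normalization, and the bundle tower $P\to P/G_+\to M$ carries a normal Cartan connection.

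\textbf{Main obstacle.} The delicate point is Step 3 in the rigid case. The symbol of $\Delta$ must be constant, or at least pointwise isomorphic to a fixed $(\m,\g_0)$, for the $G_0$-reduction to make sense. We also have to check that the Tanaka normalization yields equivariance without a semisimple ambient algebra. Here I would use reductivity of $\g_0$ and the vanishing of $\g_+$: the only curvature normalization lives in $\op{Hom}(\Lambda^2\m,\m\oplus\g_0)$, and a $G_0$-invariant complement exists by complete reducibility. I expect this complement argument to be enough.
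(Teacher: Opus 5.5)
Your main line is the paper's own argument. Theorem~\ref{Th2} gives $\op{pr}_+(\m,\g_0)=0$, so the model is $\m\rtimes\g_0$ and the Cartan connection lives on the $G_0$-reduction itself. Reductivity of $G_0$ then supplies a $G_0$-invariant complement to $\op{im}\partial$, and the standard construction (Tanaka, Alekseevsky--David) produces a normalized connection. It is unique, hence equivariant, because $\g_1=0$ removes the residual freedom. The paper says exactly this and nothing more.

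Two small precisions on that part. First, reductivity alone does not give complete reducibility: you also need the centre of $\g_0$ to act semisimply. This holds by Schur's lemma, since the centre acts on the irreducible $\g_{-1}$ through $\R$, $\C$ or the quaternions. Second, your parabolic branch is an addition the paper does not spell out, and it is fine provided you take $\g=\op{pr}(\m,\g_0)$, which the proof of Theorem~\ref{Th2} shows is simple. Merely having some simple $\g$ with $\m\cong\g_-$ is not enough: it is the equality $\op{pr}(\m,\g_0)=\g$ that makes the $G_0$-structure alone determine the regular normal parabolic geometry.

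The one genuine error is Step~1. It is false that for Heisenberg $\m$ the prolongation is either $\mathfrak{cont}$ or a finite-dimensional simple algebra. Take $\g_0=\mathfrak{sp}(2n)$: it is irreducible on $\g_{-1}$ and trivial on $\g_{-2}$, so $\m$ is $\g_0$ sub-free. Then $\h_0=\mathfrak{sp}(2n)$ contains the rank-one elements $x\mapsto\omega(v,x)v$. The prolongation is therefore infinite (it contains the polynomial Poisson algebra), although it is not $\mathfrak{cont}$. So assuming $\g_0\neq\mathfrak{csp}$ does not secure finite type.

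More importantly, the infinite-type case cannot be ``treated via the standard contact (parabolic) theory''. A bare contact structure ($G_0=CSp$), or a contact form ($G_0=Sp$), has an infinite-dimensional symmetry algebra. Hence no Cartan connection with finite-dimensional model exists, and contact projective geometry needs data beyond the $G_0$-structure. The correct move is to exclude these infinite-type Heisenberg cases outright, as the paper does implicitly by lumping Heisenberg $\m$ among the parabolic exceptions of Theorem~\ref{Th2}.
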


The existence of a Cartan connection implies that the differential invariants of $G_0$-structures 
descend from the frame bundle $\mathcal{F}$ to the base manifold $M$, 
and hence are free of auxiliary group parameters unavoidable with the general construction 
of absolute parallelism by the method of moving frames.
A Cartan connection gives a finer structure of curvatures, allowing to control symmetry reductions in twistor correspondences.
The construction of Cartan connection follows the standard scheme, see e.g.\ \cite{AD,HM}. A $G_0$-equivariant
normalization exists because the structure group $G_0$ is reductive. 
Moreover, the reductive group allows to split the principal bundle $\mathcal{F}\to M$ and hence obtain: % a refinement:

 \begin{cor}\label{Cor3}
Under the conditions of Corollary \ref{Cor2}, $M$ possesses a canonical linear torsion-free connection. 
 \end{cor}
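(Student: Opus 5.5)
The plan is to read off the linear connection directly from the Cartan connection of Corollary \ref{Cor2}, using prolongation rigidity to see that the Cartan bundle is only a $G_0$-bundle. First I fix the model. Under the hypotheses of Corollary \ref{Cor2}, Theorem \ref{Th2} gives $\op{pr}_+(\m,\g_0)=0$, with the exceptional maximal-parabolic cases excluded as before. Hence the model algebra is $\g=\op{pr}(\m,\g_0)=\m\rtimes\g_0$. The Cartan connection produced in Corollary \ref{Cor2} is then a $G_0$-equivariant $\g$-valued $1$-form $\omega$ on a principal $G_0$-bundle $\pi:\mathcal{F}\to M$. It reproduces the fundamental vector fields of $\g_0$, and $\omega_p:T_p\mathcal{F}\to\m\oplus\g_0$ is an isomorphism for every $p$. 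The key point, which is what ``splitting'' refers to, is that $\g_+=0$. So there is no tower $G_0\ltimes\exp\g_+$ of structure groups, and $\mathcal{F}$ is itself the canonical $G_0$-reduction of the graded frame bundle of the filtered manifold.

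Next I decompose $\omega=\theta+\omega_0$ along the $G_0$-invariant splitting $\m\oplus\g_0$. This decomposition is $\op{Ad}(G_0)$-invariant precisely because $G_0$ acts on $\g$ preserving the grading; this is where reductivity of $G_0$ and the absence of $\g_+$ enter. The component $\omega_0$ is $G_0$-equivariant and reproduces fundamental fields, so it is a principal connection on $\mathcal{F}$. The component $\theta$ is horizontal and equivariant, so it is a soldering form and yields an isomorphism $TM\cong\mathcal{F}\times_{G_0}\m$. Since $G_0\subset\op{Aut}_0(\m)\subset GL(\m)$, $\mathcal{F}$ becomes an ordinary $G_0$-structure on $M$. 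The principal connection $\omega_0$ therefore induces a linear connection $\nabla$ on $TM$ preserving this structure, and $\nabla$ is canonical because $\omega$ is, by the $G_0$-equivariant normalization in Corollary \ref{Cor2}.

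Then I remove the torsion. The torsion of $\nabla$ is represented on $\mathcal{F}$ by $d\theta+[\omega_0\wedge\theta]$. It is in general nonzero: it contains at least the algebraic part $-\tfrac12[\theta\wedge\theta]_\m$ coming from the bracket of $\m$, reflecting the non-integrability of $\Delta$. I would therefore set $\tilde\nabla_XY=\nabla_XY-\tfrac12T^\nabla(X,Y)$. This is a linear connection, it is torsion-free, and it is canonical since it is built only from $\nabla$. It does not preserve the $G_0$-structure, but the statement only asks for a canonical torsion-free linear connection.

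The main obstacle is canonicity, specifically that the normalization of $\omega$ can be chosen $G_0$-equivariantly and uniquely. I would handle this by fixing a $G_0$-invariant complement to $\partial(\op{Hom}(\m,\g))$ in the curvature cochains, which exists because $G_0$ is reductive and its representations are completely reducible. Standard arguments (as in \cite{AD,HM}) then make the normal $\omega$ unique, and so $\nabla$ and $\tilde\nabla$ are determined by $(M,\Delta,G_0)$ alone.
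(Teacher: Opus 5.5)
Your argument is correct, and its core is the paper's: because $\op{pr}_+(\m,\g_0)=0$ the normal Cartan connection lives on the $G_0$-bundle $\mathcal{F}$ itself. The $\op{Ad}(G_0)$-invariant decomposition $\g=\m\oplus\g_0$, which is what the paper calls splitting $\mathcal{F}\to M$, then turns $\omega$ into a soldering form plus a principal connection, hence a canonical linear connection $\nabla$ adapted to the $G_0$-structure.

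The genuine difference is the torsion. The paper asserts torsion-freeness at this point without argument. But the connection obtained from the splitting preserves $\Delta$, indeed the whole graded decomposition $\mathcal{F}\times_{G_0}\g_{-i}$. So $T^\nabla(X,Y)\equiv-[X,Y]$ modulo $\Delta$ for $X,Y\in\Gamma(\Delta)$. Equivalently, its torsion contains the algebraic term $-\tfrac12[\theta\wedge\theta]_\m$, which the Cartan curvature, having positive homogeneity, cannot cancel. Your passage to $\tilde\nabla=\nabla-\tfrac12T^\nabla$ is therefore what actually proves the statement as written. The trade-off is that $\tilde\nabla$ no longer preserves $\Delta$ or the $G_0$-structure. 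For the holonomy reductions the paper has in mind, it is the adapted, torsionful $\nabla$ that carries the geometry.

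Two minor remarks. First, invariance of $\m\oplus\g_0$ under $\op{Ad}(G_0)$ uses only that $G_0$ preserves the grading; reductivity is needed only for the existence of a $G_0$-invariant normalization complement. Second, you are right to make explicit that the maximal-parabolic exceptions of Theorem~\ref{Th2} must be excluded. There the structure group is $G_0\ltimes\exp\g_+$, and reducing it to $G_0$ amounts to choosing a Weyl structure, which is not canonical.
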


This in turn yields a possibility for holonomy reductions of the geometry on the base.

%%%%%

\end{document}